\theoremstyle{definition}
\newtheorem{defn}{Definition}
\theoremstyle{plain}
\newtheorem{thm}[defn]{Theorem}
\newtheorem{lemma}[defn]{Lemma}
\newtheorem{cor}[defn]{Corollary}
\newtheorem{main}[defn]{Main Theorem}
\theoremstyle{remark}
\newtheorem*{acknowledgement}{Acknowledgement}
\author{Shinichiroh Matsuo}
\title{The prescribed scalar curvature problem for metrics with unit total volume}
\address{Department of Mathematics, Osaka University, Toyonaka, Osaka 560-0043, Japan}
\email{matsuo@math.sci.osaka-u.ac.jp}
\thanks{This work was supported by Grant-in-Aid for Young Scientists (B) 25800045.}
\subjclass[2010]{Primary 53C25; Secondary 53C21}
\DeclareMathOperator*{\tr}{tr}
\newcommand{\yinv}{\mu}
\newcommand{\metrics}{\mathcal{M}}
\newcommand{\dv}{d\mu}
\newcommand{\R}{\mathbb{R}}
\newcommand{\EH}{E}
\newcommand{\ric}{\mathrm{Ric}}
\renewcommand{\div}{\mathrm{div}}
\renewcommand{\tr}{\mathrm{tr}}
\renewcommand{\S}{\mathcal{S}}
\newcommand{\slice}{\Sigma}
\newcommand{\vol}{\mathrm{vol}}
\begin{document}

\begin{abstract}
	We solve the modified Kazdan-Warner problem of finding metrics with prescribed scalar curvature and unit total volume.
\end{abstract}
\maketitle

\section{Introduction}
	Kazdan and Warner~\cite{MR0375154} completely solved the prescribed scalar curvature problem on closed manifolds.
	In particular, they proved the following theorem.
	\begin{thm}[{\cite[Theorem C]{MR0375154}}] \label{thm:KW}
		Let $X$ be a closed manifold of dimension $\ge 3$.
		Every function on $X$ is the scalar curvature of some metric if and only if $X$ admits a metric of positive scalar curvature.
	\end{thm}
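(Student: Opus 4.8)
The plan is to prove the two implications separately; the forward one is immediate and the converse carries all of the analysis. For the ``only if'' direction, suppose every $f\in C^\infty(X)$ is a scalar curvature. Applying this to the constant function $1$ produces a metric with positive scalar curvature, so $X$ admits such a metric and nothing more is needed. For the ``if'' direction I assume $X$ carries a metric $g_0$ with $R_{g_0}>0$, and by the resolution of the Yamabe problem I may take $R_{g_0}$ to be a positive constant. I would realize an arbitrary $f$ by splitting on its sign, the basic tool being the conformal method: writing $g=u^{4/(n-2)}g_0$ with $u>0$, the prescription $R_g=f$ becomes the semilinear elliptic equation
\[
  -\frac{4(n-1)}{n-2}\,\Delta_{g_0}u+R_{g_0}\,u=f\,u^{(n+2)/(n-2)},\qquad u>0.
\]

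When $f$ is negative somewhere, the relevant fact is independent of the positivity hypothesis: every closed manifold of dimension $\ge 3$ carries (by Aubin) a metric of constant negative scalar curvature, and in such a conformal class the displayed equation is solvable whenever its right-hand side is negative somewhere, by the negative-case solvability theory for this equation. This disposes of all $f$ taking a negative value, on any manifold. The substance of the theorem is therefore to realize the functions $f\ge 0$, and it is here that positivity of the scalar curvature is indispensable; on the torus, for instance, such $f$ apart from $f\equiv 0$ are genuinely not scalar curvatures.

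I would not attempt this inside a single conformal class, since for $f>0$ the conformally invariant equation above is obstructed (the Kazdan--Warner identity on $S^n$), so the whole metric must be varied. The mechanism is the local surjectivity of the scalar curvature map $g\mapsto R_g$, whose linearization at $g$ is
\[
  DR_g(h)=-\Delta_g(\tr_g h)+\div_g\div_g h-\langle h,\ric_g\rangle_g,
\]
with formal $L^2$-adjoint $DR_g^*(\phi)=\mathrm{Hess}_g\,\phi-(\Delta_g\phi)\,g-\phi\,\ric_g$ on functions; $DR_g$ is surjective precisely when $DR_g^*$ is injective. Tracing $DR_g^*\phi=0$ gives $(n-1)\Delta_g\phi+\phi R_g=0$, which confines the exceptional metrics to a very rigid class, and at every non-exceptional metric the implicit function theorem makes a full $C^\infty$-neighbourhood of $R_g$ consist of scalar curvatures.

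The plan is then to connect an arbitrary $f\ge 0$ to a function already known to be a scalar curvature — a positive constant, realized by the Yamabe metric, all of whose positive multiples are realized by rescaling $g_0\mapsto t^2 g_0$ — through a path of metrics along which the linearization stays surjective, propagating realizability by the local surjectivity above. The hardest point is exactly this global step: keeping the deformation away from the exceptional locus where $DR_g^*$ acquires a kernel, and ensuring the path remains in the open set of realizable functions all the way to $f$. This is where positivity re-enters, guaranteeing invertibility of the conformal Laplacian and preventing the constructed metrics from degenerating to the rigid exceptional metrics; obtaining uniform control of the cokernel of $DR_g$ along the deformation is the main obstacle of the proof.
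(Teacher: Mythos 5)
You should first note that the paper does not prove this statement at all: it is quoted as background from Kazdan--Warner \cite{MR0375154}, so your attempt can only be measured against their original argument. Your forward direction (take $f\equiv 1$) is fine, as is the reduction to a constant positive scalar curvature metric. But your treatment of the case where $f$ is somewhere negative contains a genuine error: within a \emph{fixed} conformal class of constant negative scalar curvature, the equation $-\frac{4(n-1)}{n-2}\Delta_{g_0}u+R_{g_0}u=fu^{(n+2)/(n-2)}$ is \emph{not} solvable for every $f$ that is negative somewhere. For sign-changing $f$ there are real obstructions (solvability requires conditions of Rauzy/Aubin--Bismuth type on the set where $f\ge 0$ and on $\max f$); only $f\le 0$, $f\not\equiv 0$, is unconditionally tractable by sub- and supersolutions. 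Kazdan--Warner handle "negative somewhere" by first replacing $f$ with a pullback $f\circ\varphi$ under a diffeomorphism, chosen so that the bad set is small, and this use of the diffeomorphism group is not optional.

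The decisive gap, however, is in the case $f\ge 0$: your proposal stops exactly where the proof has to happen, and you say so yourself ("obtaining uniform control of the cokernel \dots is the main obstacle"). A continuity path of metrics along which $DR_g$ stays surjective gives only \emph{openness} of the set of realizable data; without a closedness/compactness counterpart the path argument does not terminate at $f$, and no such uniform control is available in general --- this scheme is not how the theorem is proved. The idea your attempt is missing is Kazdan--Warner's approximation lemma: for any nonconstant $f$ and any constant $c$ with $\min f<c<\max f$, there exist diffeomorphisms $\varphi$ with $f\circ\varphi$ arbitrarily close to $c$ in $L^p$. Combined with quantitative local surjectivity of the scalar curvature map at a \emph{single} metric of constant scalar curvature $c$ at which $DR_g^*$ is injective (the right inverse coming from the elliptic operator $DR_g\, DR_g^*$), this realizes $f\circ\varphi$ as a scalar curvature near that metric, and pulling back by $\varphi^{-1}$ realizes $f$; the global step collapses to one perturbation plus one diffeomorphism, with no path and no cokernel control needed. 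The trace identity you derived is then used exactly as in Fischer--Marsden rigidity to show the exceptional metrics (nontrivial kernel of $DR_g^*$) are so rigid that a non-exceptional metric with $R\equiv c>0$ can always be produced --- the same circle of ideas as the non-$V$-static discussion in Lemma~\ref{lemma:non-v} of this paper. Finally, $f\equiv 0$ is not covered by your rescaling remark and needs its own argument (existence of a scalar-flat metric given a positive scalar curvature one, cf.\ the eigenvalue-continuity path in the proof of Lemma~\ref{lemma:small volume}).
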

	
	Kobayashi~\cite{MR919505} considered the modified problem of finding metrics with prescribed scalar curvature and total volume $1$.
	Before describing his result in detail, we recall the Yamabe invariant.
	Let $X$ be a closed manifold of $\dim X = m \ge 3$ and $\metrics (X)$ the space of all smooth Riemannian metrics on $X$.
	We denote by $R_g$ or $R(g)$ the scalar curvature, by $\dv_g$ the volume form, and by $\vol(X,g)$ the volume for each metric $g \in \metrics (X)$.
	Then the normalised Einstein-Hilbert functional $\EH_X \colon \metrics (X) \to \R$ is defined as
	\[
		\EH_X \colon g \mapsto \frac{\int_X R_g \,\dv_g}{\left( \vol(X,g) \right)^{\frac{m-2}{m}}}.
	\]
	The classical Yamabe problem is to find a metric $\check{g}$ in a given conformal class $C$ such that the normalised Einstein-Hilbert functional attains its minimum on $C$: $\EH_X (\check{g}) = \inf_{g \in C} \EH_X (g) =: \yinv (X, C)$.
	This minimising metric $\check{g}$ is called a Yamabe metric, and the conformal invariant $\yinv (X, C)$ the Yamabe constant.
	Each Yamabe metric has constant scalar curvature.
	We define the topological invariant $\yinv (X)$ by the supremum of $\yinv (X, C)$ of all the conformal classes $C$ on $X$:
	\[
		\yinv (X) := \sup_{C} \yinv (X, C) = \sup_{C} \inf_{g} \frac{\int_X R_g \,\dv_g}{\left( \vol(X,g) \right)^{\frac{m-2}{m}}}
 	\]
	We call this the Yamabe invariant of $X$; it is also referred to as the $\sigma$-invariant.
	See~\cite{MR919505} and~\cite{MR994021}.
	
	A conformal class $C$ contains a metric $g \in C$ with positive scalar curvature if and only if $\yinv (X, C) > 0$, so the Yamabe invariant $\yinv (X)$ is positive if and only if $X$ admits a metric of positive scalar curvature.
	We can therefore rephrase the positive-scalar-curvature assumption in Theorem \ref{thm:KW} as the Yamabe invariant of $X$ is positive.
	
	Now we return to the modified prescribed scalar curvature problem for metrics with total volume $1$.
	In the case where the Yamabe invariant of a manifold is not positive, Kobayashi~\cite[Theorem 1]{MR919505} solved the modified problem:
	If $X$ is a closed manifold of dimension $\ge 3$ with $\yinv(X) \le 0$, then a function $f$ is the scalar curvature of some metric with total volume $1$ if and only if $f < \yinv(X)$ somewhere, unless $\yinv(X) = \yinv(X,C)$ for some conformal class $C$, in which case the constant function $f \equiv \yinv(X)$ is also attained.
	In the positive Yamabe invariant case, he obtained the following result.
	See~\cite{MR978300} for the case of even dimensional spheres.
	\begin{thm}[{\cite[Theorem 3]{MR919505}}]
		Let $X$ be a closed manifold of dimension $\ge 3$, and assume that $\yinv (X) > 0$.
		Any function that is not a constant larger than or equal to $\yinv(X)$ is the scalar curvature of some metric with total volume $1$.
	\end{thm}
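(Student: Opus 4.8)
The plan is to treat separately the two kinds of admissible functions: nonconstant functions, and constants strictly below $\yinv(X)$. I use the hypothesis $\yinv(X) > 0$ only through its stated equivalent form, namely that $X$ carries a metric of positive scalar curvature, so that Kazdan--Warner's Theorem~\ref{thm:KW} is available.

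For a constant target $k < \yinv(X)$, I would realize $k$ as the scalar curvature of a Yamabe metric. Since the Yamabe problem is solved, every conformal class $C$ contains a unit-volume metric $\check g$ with $R_{\check g} \equiv \yinv(X,C)$, so it suffices to exhibit a class with $\yinv(X,C) = k$. I would show that $C \mapsto \yinv(X,C)$ is continuous on the connected space of conformal classes (a quotient of the convex set $\metrics(X)$), that its supremum is $\yinv(X)$ by definition, and that its infimum is $-\infty$ (negative scalar curvature is flexible, so Yamabe constants are unbounded below). The intermediate value theorem then produces a class with $\yinv(X,C) = k$, whose Yamabe metric is the desired unit-volume metric with $R \equiv k$.

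For a nonconstant $f$, I would first realize it and then correct the volume. Theorem~\ref{thm:KW} gives a metric $g_0$ with $R_{g_0} = f$; set $V_0 = \vol(X,g_0)$. The remaining task is to move within $\{g : R_g = f\}$ until the volume equals $1$. Consider $\Psi \colon \metrics(X) \to C^\infty(X) \times \R$, $\Psi(g) = (R_g, \vol(X,g))$, with differential $D\Psi_g(h) = (DR_g(h), \tfrac12\int_X \tr_g h \,\dv_g)$. Then $D\Psi_g$ fails to be surjective precisely when there is a pair $(\varphi,c) \neq (0,0)$ with $DR_g^{\ast}\varphi = c\,g$, where $DR_g^{\ast}\varphi = \mathrm{Hess}_g\varphi - (\Delta_g\varphi)\,g - \varphi\,\ric_g$. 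Taking the divergence and invoking the contracted Bianchi identity collapses this to $\varphi\,dR_g = 0$, while the trace of the equation (together with unique continuation) forbids $\varphi$ from vanishing on any nonempty open set; hence $R_g$ must be constant. Because $f$ is nonconstant, this obstruction never occurs along $\{g : R_g = f\}$, so $\Psi$ is a submersion there and the set of volumes realized with $R_g = f$ is open.

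To upgrade openness to attainment of the exact value $1$, I would run a continuity argument along the path $f_t = (1-t)k_0 + tf$ starting from a constant $k_0 \in (0,\yinv(X))$ handled above, maintaining a unit-volume metric $g_t$ with $R_{g_t} = f_t$; the submersion property (valid since $f_t$ is nonconstant for $t>0$) gives openness of the set of solvable $t$, which I would close by a priori estimates. The main obstacle is exactly this closedness step: scalar curvature is a weak invariant and does not control the metric, so unit-volume metrics with prescribed scalar curvature need not form a compact family. I expect to circumvent this either by carrying out the volume adjustment through a deformation supported in a small ball, where prescribing scalar curvature is underdetermined and obstruction-free and the enclosed volume can be changed freely, or by performing the continuity argument conformally, where Yamabe-type estimates are available. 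Hitting $f$ exactly, rather than a positive multiple of it, while enforcing unit volume is the crux of the theorem.
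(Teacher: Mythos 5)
This statement is quoted in the paper from Kobayashi \cite{MR919505} and is not proved there, so your attempt has to be measured against Kobayashi's original argument rather than anything in this text. Your treatment of constants $k<\yinv(X)$ follows the same outline as his: he shows (his Theorem~2) that the set of Yamabe constants $\{\yinv(X,C)\}$ fills out the interval $(-\infty,\yinv(X))$, and unit-volume Yamabe metrics then realise each such constant. But the two ingredients you merely assert are where the actual work lies. Continuity of $C\mapsto\yinv(X,C)$ is true (in the $C^2$-topology on metrics) and standard, but your justification that $\inf_C \yinv(X,C)=-\infty$ --- ``negative scalar curvature is flexible, so Yamabe constants are unbounded below'' --- fails as stated: $\yinv(X,C)$ is invariant under conformal change and under rescaling, so no deformation within a conformal class and no homothety ever makes it more negative. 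One needs a genuine sequence of distinct conformal classes, for instance unit-volume metrics $g_j$ with $\int_X R_{g_j}\,\dv_{g_j}\to-\infty$, which gives $\yinv(X,[g_j])\le \EH_X(g_j)\to-\infty$; constructing such $g_j$ (by stretching-type constructions) is precisely part of what Kobayashi proves, and it is not a one-line consequence of flexibility of negative curvature.

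The nonconstant case is a genuine gap, and you name it yourself. Your openness analysis is correct and consistent with the framework this paper uses: nontriviality of the kernel of $\S_g^*$ forces $R_g$ to be constant (this is the $V$-static dichotomy of \cite{MR3096517}, cf.\ Lemma~\ref{lemma:non-v}), so along $\{g : R_g=f\}$ with $f$ nonconstant the map $\Theta=(R,V)$ is a submersion and the set of attainable volumes is open. But openness plus connectedness cannot finish: there is no a priori compactness for unit-volume metrics with prescribed scalar curvature (a scalar curvature bound controls essentially nothing else about the metric), so the solvable set of parameters $t$ along your path $f_t$ can be open without being closed, and your closing sentences (``I expect to circumvent\dots''; ``the crux'') concede that the decisive step is missing. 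The two proposed rescues are programs, not arguments: a local deformation prescribing scalar curvature while moving the volume freely is itself a Corvino--Eichmair--Miao-type theorem whose hypothesis (absence of $V$-static or static obstructions on the deformation region) you have not verified at the metrics arising along the path, and the conformal continuity route meets the well-known loss of compactness for positive curvature. So what you have actually established is that every constant $k<\yinv(X)$ is realised modulo the two facts flagged above, and that for nonconstant $f$ the set of realised volumes is open and nonempty; hitting the volume exactly $1$ for nonconstant $f$ --- the substance of the theorem --- remains unproved.
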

	The goal of this paper is to establish the remaining cases.
	\begin{main} \label{thm:main}
		Let $X$ be a closed manifold of dimension $\ge 3$, and assume that $\yinv (X) > 0$.
		Any constant larger than or equal to $\yinv (X)$ is the scalar curvature of some metric with total volume $1$.
	\end{main}
	\begin{cor}
		Every function on a closed manifold of dimension $\ge 3$ with positive Yamabe invariant is the scalar curvature of some metric with total volume $1$.
	\end{cor}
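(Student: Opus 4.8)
The plan is to obtain the corollary as a direct consequence of the dichotomy furnished by Kobayashi's theorem and the Main Theorem, with no further analysis required. The two cited results are deliberately complementary: given an arbitrary function $f$ on $X$, exactly one of the following holds --- either $f$ is a constant function whose value is $\ge \yinv(X)$, or it is not. These two possibilities are mutually exclusive and jointly exhaust the class of all smooth functions on $X$, so it suffices to invoke the appropriate result in each case, both of which are available because we are assuming $\yinv(X) > 0$.

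First I would treat the case in which $f$ is \emph{not} a constant that is $\ge \yinv(X)$; this encompasses every non-constant function together with every constant function whose value is strictly less than $\yinv(X)$. Kobayashi's theorem (\cite[Theorem 3]{MR919505}) applies to exactly this class and yields a metric of total volume $1$ whose scalar curvature equals $f$. The complementary case is $f \equiv c$ with $c \ge \yinv(X)$, which is precisely the situation addressed by the Main Theorem; it produces a unit-volume metric realising $f$ as its scalar curvature. Combining the two cases shows that every function on $X$ is so realised, which is the assertion of the corollary.

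The corollary carries no genuine obstacle of its own: all of the hard analysis is packaged into the Main Theorem, and what remains is purely organisational. The single point to verify is the tautological one that the hypothesis ``not a constant $\ge \yinv(X)$'' of the first result and the hypothesis ``a constant $\ge \yinv(X)$'' of the second partition the space of all functions, which is immediate from their very wording.
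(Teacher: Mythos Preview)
Your proposal is correct and matches the paper's intended reasoning: the paper states the corollary immediately after the Main Theorem without a separate proof, clearly viewing it as the obvious union of Kobayashi's Theorem~3 (covering all functions except constants $\ge \yinv(X)$) and the Main Theorem (covering those remaining constants). Your write-up simply makes this case split explicit, which is exactly what is needed.
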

	We have thus completely resolved the modified Kazdan-Warner problem of finding metrics with prescribed scalar curvature and total volume $1$.

\section{Proof}

\subsection{Outline}
	We first outline the proof.
	Here and subsequently, $X$ denotes a closed manifold of $\dim X = m \ge 3$ with positive Yamabe invariant.
	We emphasise that we can not directly apply the Yamabe problem because every Yamabe metric has constant scalar curvature less than or equal to that of the standard sphere when the volume is normalised~\cite{MR0431287}.
	Instead, we glue an appropriately chosen number of copies of a suitable constant positive scalar curvature metric on the sphere $S^m$ to a suitable constant positive scalar curvature metric of total volume $1$ on $X$, to obtain a metric on $X$ with constant positive scalar curvature; we then normalise the volume.
	The main difficulty in carrying out this construction is that the linearization of the scalar curvature map at a metric of constant positive scalar curvature may not be surjective.
	We make this point more precise in the following subsections.

\subsection{$V$-static metrics}
	We introduce the notion of $V$-static metrics following Corvino, Eichmair, and Miao~\cite{MR3096517}.
	Let us denote by $R \colon \metrics(X) \to C^{\infty}(X)$ the scalar curvature map and by $V \colon \metrics(X) \to (0,\infty)$ the volume map.
	The linearization $L_g$ of $R$ at a metric $g$ is
	\[
		L_g (h) = \Delta_g (\tr_g h) + \div_g \div_g h - h \cdot \ric(g),
	\]
	where $h$ is a symmetric $(0,2)$-tensor on $X$, and its formal $L^2$-adjoint is
	\[
		L_g^* (f) = (\Delta_g f) g + \nabla_g^2 f - f \ric(g).
	\]
	Our convention is that $\Delta_g f = -\tr_g (\nabla_g^2 f)$.
	The linearization of $V$ is $DV_g (h) = \frac{1}{2} \int_X \tr_g h \,\dv_g$.
	Let $\Theta(g) := (R(g), V(g))$.
	We denote by $\S_g$ its linearization $D\Theta_g(h) = (L_g(h), DV_g(h))$.
	Its formal $L^2$-adjoint is then $\S_g^* (f,a) = L_g^* f + \frac{a}{2} g$.
	We say that a smooth metric $g$ on $X$ is \emph{$V$-static} if the equation
	\[
		\S_g^* (f,a) = 0
	\]
	admits a non-trivial solution $(f,a) \in C^{\infty}(X) \times \R$.
	Note that the $V$-static condition for a metric is invariant under constant rescaling of the metric.
	
	We have the following characterisation of non-$V$-static metrics~\cite[Example 1.3]{MR3096517}.
	\begin{lemma} \label{lemma:non-v}
		Let $X$ be a closed manifold of $\dim X = m$, and $g$ be a metric with positive constant scalar curvature.
		If $g$ is not Einstein and does not admit $R_g / (m-1)$ in the positive spectrum of the Laplacian $\Delta_g$, then it is not $V$-static.
	\end{lemma}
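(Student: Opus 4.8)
The plan is to prove the contrapositive: assuming $g$ is $V$-static, I would show that $g$ must either be Einstein or admit $R_g/(m-1)$ as a positive eigenvalue of $\Delta_g$. So suppose there is a non-trivial pair $(f,a) \in C^\infty(X) \times \R$ with
\[
	\S_g^*(f,a) = (\Delta_g f)\, g + \nabla_g^2 f - f\,\ric(g) + \frac{a}{2} g = 0.
\]
The guiding idea is that this is an overdetermined tensor equation, and that taking its $g$-trace collapses it to a single scalar equation which is easy to analyse.

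Taking the trace with respect to $g$, and using $\tr_g g = m$, the sign convention $\tr_g(\nabla_g^2 f) = -\Delta_g f$, and $\tr_g \ric(g) = R_g$, I obtain the scalar identity
\[
	(m-1)\,\Delta_g f - R_g\, f + \frac{am}{2} = 0.
\]
Since $R_g$ is a positive constant, I can set $u := f - \frac{am}{2R_g}$, which is $f$ shifted by a constant, and rewrite this as the eigenvalue equation
\[
	\Delta_g u = \frac{R_g}{m-1}\, u.
\]
This is the heart of the argument: a non-trivial $V$-static datum forces a solution of this equation, and the eigenvalue on the right is manifestly positive.

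I would then split into two cases according to whether $f$ is constant. If $f$ is non-constant, then $u$ is non-constant as well, hence $u \not\equiv 0$ and $u$ lies outside the kernel of $\Delta_g$; the displayed equation then exhibits $R_g/(m-1)$ as a genuine positive eigenvalue, so $g$ admits $R_g/(m-1)$ in the positive spectrum. If instead $f$ is a constant $c$, then $\Delta_g f = 0$ and $\nabla_g^2 f = 0$, so the full equation collapses to $c\,\ric(g) = \frac{a}{2} g$. Here non-triviality does the work: if $c = 0$ then $a = 0$ and the datum is trivial, a contradiction, so $c \neq 0$ and $\ric(g) = \frac{a}{2c}\, g$, i.e.\ $g$ is Einstein. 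In either case the conclusion of the contrapositive holds.

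The argument is essentially forced once the trace reduction is made, so I do not anticipate a serious obstacle; the only point requiring care is the constant-$f$ case, where one must genuinely invoke the non-triviality of $(f,a)$ to exclude $c=0$ and thereby conclude that $g$ is Einstein rather than that the equation merely admits the trivial solution.
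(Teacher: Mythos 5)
Your proof is correct and takes essentially the same route as the paper's: the trace of $\S_g^*(f,a)=0$ yields the eigenvalue equation $\Delta_g u = \frac{R_g}{m-1}\,u$ for the shifted function $u = f - \frac{ma}{2R_g}$, and the constant-$f$ case reduces the full tensor equation to the Einstein condition, with non-triviality ruling out $c=0$. The only difference is that you argue by contrapositive while the paper argues directly (using the two hypotheses to force $(f,a)=(0,0)$), which is the same argument.
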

	\begin{proof}
		Let $(f,a) \in C^{\infty}(X) \times \R$ be a solution of $S_g^* (f,a) = 0$.
		By taking trace of the equation and rearranging it, we obtain
		\[
			\Delta_g \left(f - \frac{ma}{2R_g}\right) = \frac{R_g}{m-1} \left(f - \frac{ma}{2R_g}\right).
		\]
		Recall that $R_g$ is a positive constant by assumption.
		Since $\Delta_g$ does not admit $R_g / (m-1)$ in the positive spectrum, we have $f - ma / 2R_g = 0$.
		Then, the equation $S_g^* (f,a) = 0$ again yields that $ma \ric_g = ag$.
		Since $g$ is not Einstein, it implies that $a = 0$.
		We have thus shown that $(f,a) = (0,0)$ and $g$ is not $V$-static.
	\end{proof}

\subsection{Non-$V$-static metrics with small positive scalar curvature}
	We construct a family of small constant positive scalar curvature metrics with total volume $1$, each of which is not $V$-static.
	In summary, we will show the existence of non-Ricci-flat scalar-flat metrics on a manifold with positive Yamabe invariant, and consider an appropriate deformation around them (cf.~\cite{MR0418147}).

	Koiso~\cite{MR539597} has established a local Yamabe theorem.
	We will use the following version of his theorem in the proof of Lemma~\ref{lemma:small volume} for an explicitly constructed path $\tilde{g}_t$ of smooth metrics, which turns out to be ``smooth" in the sense of ILH-manifolds and satisfies the assumption of Theorem~\ref{theorem: local Yamabe}.
	Note that, if a path of smooth metrics is smooth in the sense of ILH-manifolds, then the scalar curvature, the Ricci curvature, and the first positive eigenvalue of the Laplacian vary continously along the path.
	\begin{thm}[{\cite[Corollary 2.9]{MR539597}}] \label{theorem: local Yamabe}
		Let $X$ be a closed manifold and $\metrics(X)$ denote the ILH-manifold of all smooth Riemannian metrics on $X$. 
		Let $\tilde{g}_t$ be a smooth path in $\metrics(X)$.
		Assume that $\tilde{g}_0$ is conformal to a metric $g_0$ with zero scalar curvature.
		Then, for $|t| \ll 1$, there exists a smooth path $g_t$ in $\metrics(X)$ such that each $g_t$ is conformal to $\tilde{g}_t$, and has constant scalar curvature and total volume $1$.
	\end{thm}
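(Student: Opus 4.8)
The plan is to reduce the statement to an application of the implicit function theorem for the Yamabe equation, with the caveat that the base conformal class, being scalar-flat, makes the naive linearisation degenerate. Writing a conformal metric as $g = u^{\frac{4}{m-2}} \tilde{g}_t$ with $u > 0$, the scalar curvature transforms by $R_g = u^{-p} L_{\tilde{g}_t} u$, where $p = \frac{m+2}{m-2}$ and $L_{\tilde{g}_t} = \frac{4(m-1)}{m-2}\Delta_{\tilde{g}_t} + R_{\tilde{g}_t}$ is the conformal Laplacian (with the paper's sign convention $\Delta = -\tr \nabla^2$), while the volume is $\vol(X,g) = \int_X u^{p+1}\,\dv_{\tilde{g}_t}$. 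Thus finding a unit-volume metric of constant scalar curvature conformal to $\tilde{g}_t$ amounts to solving, for an unknown pair $(u,c) \in C^{\infty}(X) \times \R$ with $u > 0$,
\[
	L_{\tilde{g}_t} u = c\, u^{p}, \qquad \int_X u^{p+1}\,\dv_{\tilde{g}_t} = 1,
\]
where $c$ is the a priori unknown value of the constant scalar curvature. At $t = 0$, the hypothesis that $\tilde{g}_0$ is conformal to a scalar-flat metric means that, after rescaling to unit volume, there is a positive $u_0$ with $L_{\tilde{g}_0} u_0 = 0$ and $\int_X u_0^{p+1}\,\dv_{\tilde{g}_0} = 1$; so $(u_0, 0)$ solves the system at $t = 0$.

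I would then set $F(u,c,t) := \bigl( L_{\tilde{g}_t} u - c\, u^{p},\ \int_X u^{p+1}\,\dv_{\tilde{g}_t} - 1 \bigr)$ and solve $F = 0$ for $(u,c)$ near $(u_0, 0)$ as $t$ varies. The partial differential $D_{(u,c)}F$ at $(u_0, 0, 0)$ sends $(v,b)$ to
\[
	\Bigl( L_{\tilde{g}_0} v - b\, u_0^{p},\ (p+1)\!\int_X u_0^{p}\, v\, \dv_{\tilde{g}_0} \Bigr),
\]
since the term $c\,p\,u_0^{p-1}v$ drops out at $c = 0$. The crucial point is that $L_{\tilde{g}_0}$ is \emph{not} invertible: because the Yamabe constant of the conformal class vanishes, zero is the lowest eigenvalue of the self-adjoint operator $L_{\tilde{g}_0}$ and it is simple, with the positive function $u_0$ spanning the kernel. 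The role of the extra unknown $c$ together with the volume normalisation is precisely to repair this degeneracy.

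To verify that $D_{(u,c)}F$ is an isomorphism I would invoke the Fredholm alternative for $L_{\tilde{g}_0}$, whose image is the $L^2(\dv_{\tilde{g}_0})$-orthogonal complement of $u_0$. For injectivity, pairing the first component with $u_0$ gives $b\int_X u_0^{p+1}\,\dv_{\tilde{g}_0} = 0$, hence $b = 0$ and then $v \in \ker L_{\tilde{g}_0} = \R u_0$; the constraint $\int_X u_0^{p}\, v\,\dv_{\tilde{g}_0} = 0$ then kills this component, so $v = 0$. For surjectivity, given a target $(w,s)$ one first chooses $b$ so that $\int_X w\, u_0\,\dv_{\tilde{g}_0} + b\int_X u_0^{p+1}\,\dv_{\tilde{g}_0} = 0$, placing $w + b\,u_0^{p}$ in the image of $L_{\tilde{g}_0}$, solves for $v$ up to a multiple of $u_0$, and finally fixes that multiple using the constraint. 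Working in $H^s$ with $s > m/2$, so that $H^s$ embeds in $C^0$ and $u \mapsto u^{p}$ is smooth near the positive function $u_0$, the criticality of the exponent $p+1 = \frac{2m}{m-2}$ causes no difficulty here: we are perturbing an existing solution rather than solving the global Yamabe problem. The Banach-space implicit function theorem then yields a family $(u(t), c(t))$ solving $F = 0$ for $|t| \ll 1$, depending smoothly on $t$ since $\tilde{g}_t$, and hence $R_{\tilde{g}_t}$ and $\Delta_{\tilde{g}_t}$, vary smoothly; elliptic regularity upgrades $u(t)$ to a smooth path of smooth functions (equivalently, one runs the argument directly in the ILH category, as in Koiso).

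Finally, since $u_0 > 0$ on the compact manifold $X$, continuity gives $u(t) > 0$ for small $t$, so $g_t := u(t)^{\frac{4}{m-2}}\tilde{g}_t$ is a genuine Riemannian metric; by construction it is conformal to $\tilde{g}_t$, has constant scalar curvature $c(t)$, and unit volume. I expect the main obstacle to be exactly the non-invertibility of the conformal Laplacian at the scalar-flat base point: a direct implicit function theorem on the Yamabe map fails, and the entire argument hinges on enlarging the unknowns by the scalar-curvature constant $c$ and balancing with the volume constraint, so that the augmented linearisation becomes an isomorphism despite the one-dimensional kernel of $L_{\tilde{g}_0}$.
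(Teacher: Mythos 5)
Your argument is correct, and it is essentially the standard proof behind the cited result: the paper itself does not prove this theorem but quotes it from Koiso, whose approach is likewise an implicit-function-theorem argument in the ILH/Sobolev setting, with the degeneracy of the conformal Laplacian at the scalar-flat base point (zero being its lowest eigenvalue, with simple kernel spanned by the positive factor $u_0$) repaired exactly as you do, by augmenting the unknowns with the constant $c$ and imposing the unit-volume constraint. The only point you gesture at rather than carry out --- upgrading the $H^s$-valued solution path to a smooth path of smooth metrics --- is handled as you suggest, by uniqueness in the implicit function theorem across Sobolev indices together with elliptic regularity, which is precisely what working in the ILH category encodes.
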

	
	We now construct the desired family of small constant positive scalar curvature metrics with total volume $1$, each of which is not $V$-static.
	\begin{lemma} \label{lemma:small volume}
		Let $X$ be a closed manifold of dimension $\ge 3$ with $\yinv(X) > 0$.
		There exists a constant $\rho = \rho(X) > 0$ with the following property.
		For any $r \in (0, \rho]$, there exists a metric $g$ on $X$ with total volume $1$ that is not V-static and has the scalar curvature $R_g = r$.
	\end{lemma}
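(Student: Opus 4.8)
The plan is to manufacture, for every small $r>0$, a metric of constant scalar curvature $r$ and unit volume that is neither Einstein nor admits $r/(m-1)$ in the positive spectrum of its Laplacian; Lemma~\ref{lemma:non-v} then certifies that such a metric is not $V$-static. The mechanism is to start from a scalar-flat metric that is \emph{not} Ricci-flat and to deform it, via Theorem~\ref{theorem: local Yamabe}, into a short arc of constant-scalar-curvature, unit-volume metrics whose scalar curvatures sweep an interval $(0,\rho]$.

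First I would establish the existence of a non-Ricci-flat scalar-flat metric $g_0$ on $X$. Since $\yinv(X)>0$, the manifold $X$ carries a metric of positive scalar curvature, so Theorem~\ref{thm:KW} realises every function---in particular the constant $0$---as a scalar curvature, and a scalar-flat metric exists. If it is already non-Ricci-flat we are done; otherwise, beginning from a Ricci-flat (hence scalar-flat) metric, I would deform in a transverse-traceless direction $h$ on which the linearised Ricci operator does not vanish. Such an $h$ lies in the kernel of $L_{g_0}$, so scalar-flatness is preserved to first order, but it does not lie in the kernel of the linearised Ricci map, so $\ric$ is driven away from zero; a subsequent conformal correction, available because $L_{g_0}$ is surjective onto mean-zero functions at a scalar-flat metric, restores exact scalar-flatness without reinstating Ricci-flatness. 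This is the step I expect to be the main obstacle, precisely because Ricci-flat metrics are the degenerate $V$-static metrics sitting at the endpoint $r=0$ that we are forced to approach.

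With a non-Ricci-flat scalar-flat metric $g_0$ in hand, observe that $g_0$ is not Einstein (an Einstein scalar-flat metric is Ricci-flat) and that its first positive Laplace eigenvalue $\lambda_1>0$ is fixed. Next I would choose a smooth ILH-path $\tilde g_t$ with $\tilde g_0=g_0$ and initial velocity $\ric(g_0)$. As $\tilde g_0$ has zero scalar curvature, Theorem~\ref{theorem: local Yamabe} produces, for $|t|\ll1$, a smooth path $g_t$ conformal to $\tilde g_t$ with constant scalar curvature $R(g_t)$ and unit volume. Since $R(g_t)$ is constant and the volume is $1$, we have $R(g_t)=\int_X R_{g_t}\,\dv_{g_t}$, so its $t$-derivative is the first variation of the total scalar curvature; writing the velocity of $g_t$ as $\ric(g_0)+\phi\,g_0$, the conformal term drops out because $\tr_{g_0}\ric(g_0)=R_{g_0}=0$, and
\[
	\frac{d}{dt}\Big|_{t=0} R(g_t) = -\int_X \lvert \ric(g_0)\rvert^2 \,\dv_{g_0} < 0 .
\]
Hence $t\mapsto R(g_t)$ is a local diffeomorphism near $t=0$, and for $t$ slightly negative its values fill an interval $(0,\rho]$ with $\rho=\rho(X)>0$.

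Finally, for each $r\in(0,\rho]$ I would set $g:=g_{t(r)}$, where $R(g_{t(r)})=r$; this metric has constant scalar curvature $r$ and unit volume. Shrinking $\rho$ if necessary, two facts persist along the arc by continuity. First, being non-Einstein is an open condition (the trace-free Ricci tensor varies continuously and is nonzero at $g_0$), so each $g_t$ is non-Einstein. Second, since $R(g_t)/(m-1)\to0$ while $\lambda_1(\Delta_{g_t})\to\lambda_1(\Delta_{g_0})=\lambda_1>0$ along the ILH-smooth path, we obtain $R(g_t)/(m-1)<\lambda_1(\Delta_{g_t})$, so $R(g_t)/(m-1)$ lies strictly below the positive spectrum of $\Delta_{g_t}$. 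Lemma~\ref{lemma:non-v} then shows that each such $g_t$ is not $V$-static, which completes the construction.
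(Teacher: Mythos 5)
Your first-variation computation and the endgame are fine and essentially parallel to the paper: the derivative $\frac{d}{dt}\big|_{t=0}R(g_t)=-\int_X|\ric(g_0)|^2\,\dv_{g_0}<0$ is a nice quantitative substitute for the paper's device of running the path $\tilde g_t$ from a negative- to a positive-scalar-curvature metric (pivoting where the first eigenvalue of the conformal Laplacian vanishes) so that $R(g_t)$ changes sign, and your continuity arguments for non-Einsteinness and for keeping $R(g_t)/(m-1)$ below the positive spectrum match the paper's. The genuine gap is exactly the step you flagged: producing a non-Ricci-flat scalar-flat metric when your initial scalar-flat metric is Ricci-flat. Your conformal correction cannot ``restore exact scalar-flatness.'' At a scalar-flat metric the linearization of $R$ in conformal directions is $u\mapsto (m-1)\Delta_{g_0}u$, whose image is precisely the mean-zero functions, so the constants form the cokernel; the implicit function theorem therefore only lets you conformally normalize $g_0+th$ to \emph{constant} scalar curvature $c(t)$, and the constant $c(t)$ (of order $t^2$, with sign governed by the second variation of total scalar curvature in the direction $h$, i.e.\ by the stability of the Ricci-flat metric) is forced on you, not at your disposal. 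If $c(t)<0$ along every transverse-traceless direction with nonvanishing linearized Ricci --- the ``stable'' case --- your construction yields only negative constant scalar curvature metrics and never a scalar-flat non-Ricci-flat one; indeed, for Ricci-flat metrics with parallel spinors it is a theorem (Dai--Wang--Wei) that all nearby metrics with $R\ge 0$ are Ricci-flat, so no local deformation of this kind can work in general, and whether an unstable direction always exists is open. Even when $c(t)\equiv 0$ you would still need to rule out that the scalar-flat representative has drifted back into the Ricci-flat moduli.

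The paper circumvents this local analysis with a global, dimension-counting argument that is missing from your proposal. Its scalar-flat metric $g_0$ arises as an interior point of a path of unit-volume constant-scalar-curvature metrics (via Theorem~\ref{theorem: local Yamabe}), so it sits inside the slice $\slice_{g_0}$ for the diffeomorphism action on such metrics, and this slice contains both positive and negative constant scalar curvature metrics. If $g_0$ is Einstein (hence Ricci-flat), the paper invokes Koiso's theorem that all Einstein metrics in $\slice_{g_0}$ lie in a \emph{finite-dimensional} real analytic submanifold of the infinite-dimensional slice; one can therefore join a non-Einstein negative-scalar-curvature metric to a non-Einstein positive-scalar-curvature metric by a path in $\slice_{g_0}$ avoiding the Einstein locus, and the intermediate value theorem produces a non-Einstein scalar-flat metric on that path, after which the generic argument reruns. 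Note also that the paper's Corollary (every closed manifold with $\yinv>0$ carries a non-Ricci-flat scalar-flat metric) is an \emph{output} of this proof, so your plan cannot lean on any statement of that strength as an input; as written, your Ricci-flat case does not close, and this is the heart of the lemma.
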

	\begin{proof}
		We will construct a path $g_t$ of non-$V$-static constant-scalar-curvature unit-total-volume metrics such that  $R(g_0) = 0$ and $R(g_t) > 0$ for $t > 0$.

		Since $X$ has positive Yamabe invariant, there exists a metric $\tilde{g}_{+1}$ with positive scalar curvature.
		On the other hand, there always exists a metric $\tilde{g}_{-1}$ with negative scalar curvature.
		Let $\tilde{g}_t = \frac{t+1}{2} \tilde{g}_{+1} + \frac{-t+1}{2} \tilde{g}_{-1}$ for $-1 \le t \le 1$.
		The path $\tilde{g}_t$ in $\metrics(X)$ is smooth in the sense of ILH-manifolds.
		Then, since the first eigenvalue of the conformal Laplacian depends continuously on $t$ and those of $\tilde{g}_{-1}$ and $\tilde{g}_{+1}$ are negative and positive respectively, we can find $\tilde{g}_{t_0}$ for some $t_0 \in (-1, 1)$ with the first eigenvalue of the conformal Laplacian equal to zero, so that $\tilde{g}_{t_0}$ is conformal to a metric with zero scalar curvature~\cite[Theorem 3.9]{MR0365409}.
		We have thus shown that the path $\tilde{g}_t$ and the metric $\tilde{g}_{t_0}$ satisfy the assumption of Theorem~\ref{theorem: local Yamabe}.
		
		We now apply Theorem~\ref{theorem: local Yamabe} to the path $\tilde{g}_t$ and the metric $\tilde{g}_{t_0}$ to obtain a new smooth path $g_t$ of constant scalar curvature smooth metrics with total volume $1$ that satisfies $R(g_0) = 0$ and $R(g_t) > 0$ for $t > 0$; furthermore, we can assume $g_t$ does not admit $R(g_t) / (m-1)$ in the positive spectrum of the Laplacian $\Delta_{g_t}$ for $t \ge 0$ because the first positive eigenvalue of the Laplacian and the scalar curvature vary continuously along the path.

		If $g_0$ is not Einstein, then $g_t$ is not Einstein for any $0 < t \ll 1$.
		Consequently, we have shown, by Lemma~\ref{lemma:non-v}, that there exists a constant $\tau > 0$ such that $g_t$ is not $V$-static for any $t \in (0,\tau]$.
		Set $\rho := R(g_{\tau})$.
		
		Assume $g_0$ is Einstein.
		The smooth diffeomorphism group of $X$ acts on the space of all smooth Riemannian metrics with constant scalar curvature and total volume $1$; its slice around $g_0$ is denoted by $\slice_{g_0}$.
		The construction of $g_0$ guarantees that the slice $\slice_{g_0}$ contains both positive and negative constant scalar curvature metrics.
		The slice $\slice_{g_0}$ itself is infinite dimensional, but Koiso~\cite[Theorem 3.1] {MR707349} has shown that there exists a finite dimensional real analytic submanifold of $\slice_{g_0}$ that contains all the Einstein metrics in $\slice_{g_0}$.
		We can find, therefore, a non-Einstein negative-scalar-curvature smooth metric $g_-$, a non-Einstein positive-scalar-curvature smooth metric $g_+$ in $\slice_{g_0}$, and a smooth path of non-Einstein smooth metrics in $\slice_{g_0}$ from $g_-$ to $g_+$
		Since the scalar curvature varies continuously along the path, there exists a non-Einstein zero-scalar-curvature smooth metric on this path.
		Now the rest of the proof runs as above.
	\end{proof}
	
	\begin{cor}
		There exists a non-Ricci-flat scalar-flat metric on any closed manifold of dimension $\ge 3$ with positive Yamabe invariant.
	\end{cor}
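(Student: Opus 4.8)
The plan is to read the desired metric directly off the construction already carried out in the proof of Lemma~\ref{lemma:small volume}, after recording one elementary equivalence. First I would observe that, for a scalar-flat metric, being non-Ricci-flat is the same as being non-Einstein. Indeed, if a scalar-flat metric $g$ were Einstein, then $\ric(g) = \lambda g$ for some constant $\lambda$, and taking the trace gives $0 = R_g = m\lambda$, so $\lambda = 0$ and hence $\ric(g) = 0$; conversely a Ricci-flat metric is evidently scalar-flat and Einstein. It therefore suffices to produce a scalar-flat metric on $X$ that is not Einstein.

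Next I would point out that precisely such a metric is manufactured in the course of proving Lemma~\ref{lemma:small volume}. Applying Theorem~\ref{theorem: local Yamabe} to the path $\tilde{g}_t$ yields a smooth path $g_t$ of constant-scalar-curvature metrics with $R(g_0) = 0$, so $g_0$ is scalar-flat. The argument there divides into two cases. If $g_0$ is not Einstein, then $g_0$ itself is the required scalar-flat non-Einstein metric. If instead $g_0$ is Einstein, the analysis on the slice $\slice_{g_0}$ explicitly supplies a smooth path of non-Einstein metrics joining a negative-scalar-curvature metric $g_-$ to a positive-scalar-curvature metric $g_+$, and by continuity of the scalar curvature along this path there is a non-Einstein zero-scalar-curvature metric on it. In both cases we have exhibited a scalar-flat metric that is not Einstein, and by the equivalence above it is not Ricci-flat, which is what we want.

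I do not expect any genuine obstacle here: the substantive work—building a scalar-flat metric and arranging that it be non-Einstein so that Lemma~\ref{lemma:non-v} applies—has already been done inside Lemma~\ref{lemma:small volume}. The only content of the corollary is to recognise that the scalar-flat metric constructed there was deliberately chosen to be non-Einstein, and to translate ``non-Einstein'' into ``non-Ricci-flat'' in the scalar-flat setting via the trace computation above. Accordingly, the proof is essentially a matter of extracting and reinterpreting the metric already in hand.
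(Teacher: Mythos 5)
Your proposal is correct and matches the paper's (implicit) argument exactly: the corollary is stated right after Lemma~\ref{lemma:small volume} precisely because its proof constructs, in both cases, a non-Einstein scalar-flat metric, and the trace identity $0 = R_g = m\lambda$ shows that for scalar-flat metrics non-Einstein and non-Ricci-flat coincide. Nothing further is needed.
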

	
\subsection{Gluing}
	Corvino, Eichmair, and Miao~\cite{MR3096517} have established a fundamental gluing result for constant scalar curvature metrics.
	We will use the following ``non-local'' version of their theorem.
	\begin{thm}[{\cite[Theorem 1.6]{MR3096517}}] \label{thm:gluing}
		Let $(M_1, g_1)$ and $(M_2, g_2)$ be two closed $m$-dimensional Riemannian manifolds such that $R_{g_1} = R_{g_2} = m(m-1)$.
		Assume that both $g_1$ and $g_2$ are not $V$-static.
		Then, there exists a smooth metric $g$ on the connected sum $M_1 \# M_2$ such that $R_g = m(m-1)$ and $\vol (M,g) = \vol(M_1,g_1) + \vol(M_2,g_2)$.
	\end{thm}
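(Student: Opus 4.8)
The plan is to realize the desired metric on $M_1 \# M_2$ as a small perturbation of an explicit approximate glued metric, and to solve the coupled equation $\Theta(g) = (m(m-1),\, V_0)$, where $V_0 := \vol(M_1,g_1) + \vol(M_2,g_2)$, by a fixed-point argument whose invertibility input is furnished by the non-$V$-static hypothesis. Treating the volume as a component of the map $\Theta = (R,V)$ is precisely what will let us hit the target volume \emph{exactly} rather than merely approximately, so the combined operator $\S_g$ and its adjoint $\S_g^*$ are the right objects throughout.

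First I would construct an approximate solution. Fix points $p_i \in M_i$ with normal coordinates centred there. For a small neck parameter $\varepsilon > 0$, excise the geodesic balls $B(p_i, \varepsilon)$ and glue the truncated manifolds along a neck of diameter comparable to $\varepsilon$, producing a family $g_\varepsilon$ of smooth metrics on $M_1 \# M_2$ that agrees with $g_i$ outside $B(p_i, 2\varepsilon)$. Since $g_\varepsilon = g_i$ away from the neck, one has $R_{g_\varepsilon} = m(m-1)$ there, and both the curvature defect $R_{g_\varepsilon} - m(m-1)$ and the volume defect $\vol(M_1 \# M_2, g_\varepsilon) - V_0$ are supported in, or controlled by, the neck region and hence tend to $0$ as $\varepsilon \to 0$ in the relevant weighted norms.

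Next I would establish uniform invertibility. Because $g_1$ and $g_2$ are not $V$-static, the adjoints $\S_{g_i}^*$ have trivial kernel, so each $\S_{g_i}$ is surjective with a bounded right inverse on the closed manifold $M_i$. The crucial step is to upgrade this to a \emph{uniform} surjectivity of $\S_{g_\varepsilon}$ on $M_1 \# M_2$, with a right inverse $Q_\varepsilon$ whose operator norm stays bounded as $\varepsilon \to 0$. I would argue by contradiction in weighted Hölder (or Sobolev) spaces adapted to the neck: if the lower bound for $\S_{g_\varepsilon}^*$ degenerated along a sequence $\varepsilon_k \to 0$, then rescaling and passing to the limit would produce a nontrivial pair $(f,a)$ solving $\S_{g_i}^*(f,a) = 0$ on one of the $M_i$, contradicting the non-$V$-static assumption via Lemma~\ref{lemma:non-v}. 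The finite-dimensional factor $a \in \R$ coming from the volume constraint must be tracked carefully through the blow-up, so that the limiting equation is exactly $\S_{g_i}^*(f,a) = 0$ and not merely $L_{g_i}^* f = 0$.

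Finally, with the uniformly bounded right inverse $Q_\varepsilon$ in hand, I would run a contraction-mapping (Picard) iteration to solve $\Theta(g_\varepsilon + h) = (m(m-1),\, V_0)$ exactly for a small symmetric $(0,2)$-tensor $h$ built from $Q_\varepsilon$. The approximate-solution estimate bounds the initial defect $\Theta(g_\varepsilon) - (m(m-1), V_0)$, the quadratic remainder of $\Theta$ is controlled on small balls in the perturbation variable, and the bounded right inverse closes the iteration once $\varepsilon$ is sufficiently small. The resulting metric $g := g_\varepsilon + h$ then satisfies $R_g = m(m-1)$ and $\vol(M_1 \# M_2, g) = V_0$ precisely. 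The main obstacle is the uniform invertibility in the third paragraph: the neck pinches as $\varepsilon \to 0$, so the estimates must be performed in norms that prevent $Q_\varepsilon$ from blowing up, and the non-$V$-static hypothesis enters exactly to rule out this degeneration.
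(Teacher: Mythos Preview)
The paper does not prove this theorem at all: it is quoted as \cite[Theorem~1.6]{MR3096517} from Corvino--Eichmair--Miao and used as a black box in the proof of the Main Theorem. There is therefore no proof in the present paper to compare your proposal against.

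That said, your outline is a reasonable high-level sketch of the gluing template (approximate glued metric, uniform right-invertibility of $\S_{g_\varepsilon}$ furnished by the non-$V$-static hypothesis, contraction mapping to correct both scalar curvature and volume), and it is in the spirit of the cited reference. One small logical slip: you say the blow-up contradiction would go ``via Lemma~\ref{lemma:non-v}'', but that lemma only gives a \emph{sufficient} criterion for a metric to be non-$V$-static; the contradiction you need comes directly from the hypothesis that $\ker \S_{g_i}^* = 0$, not from that lemma.
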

	Rescaling metrics, we obtain the following corollary.
	\begin{cor} \label{cor:gluing}
		Let $(M_1, g_1)$ and $(M_2, g_2)$ be two closed $m$-dimensional Riemannian manifolds with constant positive scalar curvature such that $\vol(M_1,g_1) = \vol(M_2,g_2) = 1$.
		Assume that both $g_1$ and $g_2$ are not $V$-static.
		Then, there exists a constant positive scalar curvature metric $g$ on the connected sum $M_1 \# M_2$ such that $R_g^{m/2} = R_{g_1}^{m/2} + R_{g_2}^{m/2}$ and $\vol (M,g) = 1$.
	\end{cor}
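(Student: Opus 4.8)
The plan is to deduce the corollary from Theorem~\ref{thm:gluing} by pre- and post-composing the gluing construction with constant rescalings, using the observation recorded above that the $V$-static condition is invariant under constant rescaling of a metric. Throughout I write $M := M_1 \# M_2$ and recall that under $g \mapsto \lambda^2 g$ with $\lambda > 0$ one has $R_{\lambda^2 g} = \lambda^{-2} R_g$ and $\vol(X, \lambda^2 g) = \lambda^m \vol(X, g)$; in particular the scale-invariant quantity $R_g^{m/2}\,\vol(X,g)$ will play the organising role.

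First I would normalise the scalar curvature of each input. Writing $r_i := R_{g_i} > 0$, set $\hat{g}_i := \lambda_i^2 g_i$ with $\lambda_i^2 := r_i / (m(m-1))$, so that $R_{\hat{g}_i} = m(m-1)$. Since $\vol(M_i, g_i) = 1$, this gives
\[
    \vol(M_i, \hat{g}_i) = \lambda_i^m = \left( \frac{r_i}{m(m-1)} \right)^{m/2},
\]
and, by scale-invariance of the $V$-static condition, each $\hat{g}_i$ remains non-$V$-static. I would then apply Theorem~\ref{thm:gluing} to $(M_1, \hat{g}_1)$ and $(M_2, \hat{g}_2)$ to obtain a metric $\hat{g}$ on $M$ with $R_{\hat{g}} = m(m-1)$ and
\[
    \vol(M, \hat{g}) = \vol(M_1, \hat{g}_1) + \vol(M_2, \hat{g}_2) = \frac{r_1^{m/2} + r_2^{m/2}}{(m(m-1))^{m/2}}.
\]

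Finally I would rescale once more to restore unit volume: setting $g := \mu^2 \hat{g}$ with $\mu^m := 1 / \vol(M, \hat{g})$ yields $\vol(M, g) = 1$ and
\[
    R_g = (\mu^m)^{-2/m}\, R_{\hat{g}} = \left( r_1^{m/2} + r_2^{m/2} \right)^{2/m}.
\]
Hence $R_g$ is a positive constant and $R_g^{m/2} = r_1^{m/2} + r_2^{m/2} = R_{g_1}^{m/2} + R_{g_2}^{m/2}$, which is exactly the asserted additive law.

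I do not expect a genuine obstacle here: all of the analytic substance is contained in Theorem~\ref{thm:gluing}, and the corollary reduces to bookkeeping of scaling exponents together with the scale-invariance of the $V$-static property. The only point demanding care is checking that the two rescalings reproduce precisely the relation $R_g^{m/2} = R_{g_1}^{m/2} + R_{g_2}^{m/2}$ rather than some other combination; this becomes transparent once one notes that $R^{m/2}$ scales as $\lambda^{-m}$ while the volume scales as $\lambda^{m}$, so that $R^{m/2}\,\vol$ is scale-invariant and the additivity of volume furnished by Theorem~\ref{thm:gluing} transfers directly to additivity of $R^{m/2}$ after normalising the volume to $1$.
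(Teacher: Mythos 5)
Your proposal is correct and coincides with the paper's own (implicit) proof: the paper simply says ``Rescaling metrics, we obtain the following corollary,'' and your argument supplies exactly those details --- normalising each $g_i$ to scalar curvature $m(m-1)$, applying Theorem~\ref{thm:gluing}, and rescaling back to unit volume, using the scale-invariance of the $V$-static condition noted in the paper. The scaling bookkeeping, including the identity $R_g^{m/2} = R_{g_1}^{m/2} + R_{g_2}^{m/2}$ via the scale-invariant quantity $R^{m/2}\,\vol$, checks out.
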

	
	Now we prove our main theorem~\ref{thm:main}.
	\begin{proof}[Proof of Theorem \ref{thm:main}]
		Define $r_1 := \min \{ \rho(X), \rho(S^m) \}$, where $\rho$ is introduced in Lemma~\ref{lemma:small volume}.
		Let $r > 0$ be given.
		We express $r^{m/2} = r_0^{m/2} + (k^{2/m}r_1)^{m/2}$ in terms of $k \in \{0, 1, 2,\dots\}$ and $r_0 \in (0,r_1]$. 
		
		By Lemma~\ref{lemma:small volume}, there exist a metric $g_{r_0}$ on $X$ and a metric $g_{r_1}$ on $S^m$ such that $R(g_{r_0}) = r_0$, $R(g_{r_1}) = r_1$, $\vol(X,g_{r_0}) = \vol(S^m,g_{r_1}) = 1$, and both metrics are not $V$-static.
		In case $k > 0$, let $(M_2, g_2)$ be the $k$-disjoint union $(S^m, k^{-2/m} g_{r_1}) \sqcup \dots \sqcup (S^m, k^{-2/m} g_{r_1})$.
		Note that $R(g_2) = k^{2/m} r_1$, $\vol(M_2, g_2) = 1$, and $g_2$ is not $V$-static.
		We now apply Corollary~\ref{cor:gluing} for $(M_1, g_1) = (X, g_{r_0})$ and $(M_2, g_2)$ to obtain a metric $g$ on $X = X \# S^m \# \dots \# S^m$ such that $R_g^{m/2} = R_{g_1}^{m/2} + R_{g_2}^{m/2} = r^{m/2}$ and $\vol(X,g) = 1$.
		We have therefore constructed a metric $g$ on $X$ with $R_g = r$ and $\vol(X,g) = 1$ for any given $r > 0$.
	\end{proof}

\begin{acknowledgement}
	The author wishes to express his gratitude to O. Kobayashi for suggesting the problem and for many stimulating conversations, and to K. Akutagawa, N. Koiso, and N. Otoba for useful discussions on various aspects of this work.
	He also gratefully acknowledges the many helpful suggestions of the anonymous referee.
\end{acknowledgement}

\bibliographystyle{amsplain}
\bibliography{KW.bib}
\end{document}